\documentclass{amsart}

\usepackage{geometry} % this sets the "usual" text width and margins
\usepackage{amssymb, mathrsfs}
\usepackage{relsize}
\usepackage[all]{xy}
\usepackage{xcolor}
\usepackage{dsfont}
\usepackage[hidelinks]{hyperref}
\usepackage{cleveref}

\numberwithin{equation}{section}

\newtheorem{theorem}[subsubsection]{Theorem}
\newtheorem{proposition}[subsubsection]{Proposition}

\newtheorem{corollary}[subsubsection]{Corollary}
\newtheorem{lemma}[subsubsection]{Lemma}

\newtheorem*{theorem*}{Theorem}

\usepackage{mathtools}

\theoremstyle{definition}
\newtheorem{definition}[subsubsection]{Definition}

\theoremstyle{remark}
\newtheorem{example}[subsubsection]{Example}
\newtheorem{remark}[subsubsection]{Remark}

\title[]{Another counterexample to the Nerves of Steel Conjecture}
\author{Kevin Coulembier}
\address{School of Mathematics and Statistics, University of Sydney, Australia}
\email{kevin.coulembier@sydney.edu.au}

\newcommand{\bk}{\Bbbk}

\newcommand{\cA}{\mathcal{A}}
\newcommand{\cB}{\mathcal{B}}
\newcommand{\cL}{\mathcal{L}}

\newcommand{\Spc}{\operatorname{Spc}}

\newcommand{\unit}{\mathds{1}}

\newcommand{\Rep}{\mathtt{Rep}}

\newcommand{\GL}{\operatorname{GL}}

\newcommand{\fp}{\mathrm{fp}}

\newcommand{\univ}{\mathrm{univ}}

\newcommand{\mZ}{\mathbb{Z}}
\newcommand{\mN}{\mathbb{N}}

\newcommand{\cI}{\mathcal{I}}

\newcommand{\cK}{\mathcal{K}}
\newcommand{\cC}{\mathcal{C}}

\newcommand{\cT}{\mathcal{T}}

\newcommand{\id}{\mathrm{id}}
\newcommand{\ev}{\mathrm{ev}}
\newcommand{\coev}{\mathrm{coev}}

\newcommand{\Hom}{\operatorname{Hom}}
\newcommand{\End}{\operatorname{End}}

\newcommand{\K}{\mathtt{K}}

\renewcommand{\part}{\mathscr{P}}

\begin{document}

\begin{abstract}
In \cite{Balmer20} Balmer introduced the homological spectrum of a rigid tensor-triangulated category, and had the ``nerves of steel'' not to conjecture that it was in bijection with the prime spectrum, despite an ``avalanche of examples''. Naturally, the statement came to be known as Balmer's Nerves of Steel Conjecture. More recently, in \cite{NVY} the conjecture was stated explicitly and extended to a more general non-symmetric context. In this paper we provide a counterexample to the original suggestion in \cite{Balmer20}, so also to the conjecture from \cite{NVY}. A first counterexample was recently constructed in \cite{BHR}.
\end{abstract}

\keywords{}

\maketitle

%%%%%%%%%%%%%%%%%%%%%%%%%%%%%%%%%%%%%%%%%%%%%%%%%%%%%%%%%%%%%%%%%%%%%%%%%%%%%%%%%%%%%%%%%%%%%%%%%%%%%%%%%%%%%%%%%%%%%%%%%%%%%%%%%%%%%%%%%%%%%%%%%%%%%%%%%%%%%%%%%%%%%%%%%%%%%%%%%%%%%%%%%%%%%%%%%%%%%%%%%%%%%%%%%%%%%%%%%%%%%%%%%%%%%%%%%%%%%%%%%%%%%%%%%%%%%%%%%%%%%%%%%%%%%%%%%%%%%%%%%%%%%%%%%%%%%%%%%%%%%%%%%%%%%%%%%%%%%%%%%%%%%%%%%%%%%%%%%%%%%%%%%%%%%%%%%%%%%%%%%%%%%%%%%%%%%%%%%%%%%%%%%%%%%%%%%%%%%%%%%%%%%%%%%%%%%%%%%%%%%%%%%%%%%%%%%%%%%%%%%%%%%%%%%%%%%%%%%%%%%%%%%%%%%%%%%%%%%%%%%%%%%%%%%%%%%%%%%%%%%%%%%%%%%%%%%%%%%%%%%%%%%%%%%%%%%%%%%%%%%%%%%%%%%%%%%%%%%%%%%%%%%%%%%%%%%%%%%%%%%%%%%%%%%%%%%%%%%%%%%%%%%%%%%%%%%%%%%%%%%%%%%%%%%%%%%%%%%%%%%%%%%%%%%%%%%%%%%%%%%%%%%%%%%%%%%%%%%%%%%%%%%%%%%%%%%%%%%%%%%%%%%%%%%%%%%%%%%%%%%%%%%%%%%%%%%%%%%%%%%%%%%%%%%%%%%%%%%%%%%%%%%%%%%%%%%%%%%%%%%%%%%%%%%%%%%%%%%%%%%%%%%%%%%%%%%%%%%%%%%%%%%%%%%%%%%%%%%%%%%%%%%%%%%%%%%%%%%%%%%%%%%%%%%%%%%%%%%%%%%%%%%%%%%%%%%%%%%%%%%%%%%%%%%%%%%%%%%%%%%%%%%%%%%%%%%%%%%%%%%%%%%%%%%%%%%%%%%%%%%%%%%%%%%%%%%%%%%%%%%%%%%%%%%%%%%%%%%%%%%%%%%%%%%%%%%%%%%%%%%%%%%%%%%%%%%%%%%%%%%%%%%%%%%%%%%%%%%%%%%%%%%%%%%%%%%%%%%%%%%%%%%%%%%%%%%%%%%%%%%%%%%%%%%%%%%%%%%%%%%%%%%%%%%%%%%%%%%%%%%%%%%%%%%%%%%%%%%%%%%%%%%%%%%%%%%%%%%%%%%%%%%%%%%%%%%%%%%%%%%%%%%%%%%%%%%%%%%%%%%%%%%%%%%%%%%%%%%%%%%%%%%%%%%%%%%%%%%%%%%%%%%%%%%%%%%%%%%%%%%%%%%%%%%%%%%%%%%%%%%%%%%%%%%%%%%%%%%%%%%%%

\section{Introduction}

In his work on nilpotence theorems for tensor-triangulated categories via homological residue fields in \cite{Balmer20}, Balmer introduced the homological spectrum of a tensor-triangulated category. It comes with a comparison map to the Balmer spectrum \cite{Balmer05} of the same category. In the rigid case it was shown that the comparison map is always surjective. Moreover, it was demonstrated in \cite[\S 5]{Balmer20} that the comparison map is even a bijection in many important examples such as the derived category of perfect complexes over schemes and categories of spectra.

The suggestion in \cite[Remark~5.15]{Balmer20} that this surjection might always be a bijection has become known as the Nerves of Steel Conjecture. Further progress, including several equivalent
formulations, was obtained in \cite{Balmer20b, BHS, H}. The homological spectrum was extended to a non-symmetric setting in \cite{NVY} where a generalisation of the conjecture was stated as \cite[Conjecture~5.6]{NVY}.

In this note we construct a counterexample to the conjecture. To explain it we can use the result in \cite[Theorem~A.1]{Balmer20b} which shows that failure of the Nerves of Steel Conjecture for a local rigid tensor-triangulated category is equivalent to the existence of two morphisms $f,g$ with $f\otimes g=0$ but for which neither morphism is $\otimes$-nilpotent (even when tensoring with an auxiliary non-zero object). Our counterexample is constructed as follows. First we consider a universal additive rigid monoidal category with two morphisms $f,g$ out of the tensor unit with $f\otimes g=0$. Subsequently we take the bounded homotopy category to obtain a rigid tensor-triangulated category. It remains to prove that the universal relations
impose no tensor-nilpotence. To achieve this we exploit that by construction our category lives above well-understood semisimple Deligne categories, in which we can demonstrate that no nilpotence arises.

\begin{remark}
At the circulation of the first draft of the paper, it was pointed out to the author that a counterexample to the conjecture was already announced 6 months earlier in \cite{BHR}. Since the construction in \cite{BHR} uses more advanced machinery (although it also uses semisimple Deligne categories) it still seemed worthwhile to make the current counterexample available. Interestingly, recently it was also proved in \cite{Hyslop2} that for rigid tensor-triangulated categories over~$\mathbb{Q}$ (such as the counterexample in this paper) satisfying Schur-finiteness (contrary to our example), the Nerves of Steel Conjecture holds true.
\end{remark}

%%%%%%%%%%%%%%%%%%%%%%%%%%%%%%%%%%%%%%%%%%%%%%%%%%%%%%%%%%%%%%%%%%%%%%%%%%%%%%%%%%%%%%%%%%%%%%%%%%%%%%%%%%%%%%%%%%%%%%%%%%%%%%%%%%%%%%%%%%%%%%%%%%%%%%%%%%%%%%%%%%%%%%%%%%%%%%%%%%%%%%%%%%%%%%%%%%%%%%%%%%%%%%%%%%%%%%%%%%%%%%%%%%%%%%%%%%%%%%%%%%%%%%%%%%%%%%%%%%%%%%%%%%%%%%%%%%%%%%%%%%%%%%%%%%%%%%%%%%%%%%%%%%%%%%%%%%%%%%%%%%%%%%%%%%%%%%%%%%%%%%%%%%%%%%%%%%%%%%%%%%%%%%%%%%%%%%%%%%%%%%%%%%%%%%%%%%%%%%%%%%%%%%%%%%%%%%%%%%%%%%%%%%%%%%%%%%%%%%%%%%%%%%%%%%%%%%%%%%%%%%%%%%%%%%%%%%%%%%%%%%%%%%%%%%%%%%%%%%%%%%%%%%%%%%%%%%%%%%%%%%%%%%%%%%%%%%%%%%%%%%%%%%%%%%%%%%%%%%%%%%%%%%%%%%%%%%%%%%%%%%%%%%%%%%%%%%%%%%%%%%%%%%%%%%%%%%%%%%%%%%%%%%%%%%%%%%%%%%%%%%%%%%%%%%%%%%%%%%%%%%%%%%%%%%%%%%%%%%%%%%%%%%%%%%%%%%%%%%%%%%%%%%%%%%%%%%%%%%%%%%%%%%%%%%%%%%%%%%%%%%%%%%%%%%%%%%%%%%%%%%%%%%%%%%%%%%%%%%%%%%%%%%%%%%%%%%%%%%%%%%%%%%%%%%%%%%%%%%%%%%%%%%%%%%%%%%%%%%%%%%%%%%%%%%%%%%%%%%%%%%%%%%%%%%%%%%%%%%%%%%%%%%%%%%%%%%%%%%%%%%%%%%%%%%%%%%%%%%%%%%%%%%%%%%%%%%%%%%%%%%%%%%%%%%%%%%%%%%%%%%%%%%%%%%%%%%%%%%%%%%%%%%%%%%%%%%%%%%%%%%%%%%%%%%%%%%%%%%%%%%%%%%%%%%%%%%%%%%%%%%%%%%%%%%%%%%%%%%%%%%%%%%%%%%%%%%%%%%%%%%%%%%%%%%%%%%%%%%%%%%%%%%%%%%%%%%%%%%%%%%%%%%%%%%%%%%%%%%%%%%%%%%%%%%%%%%%%%%%%%%%%%%%%%%%%%%%%%%%%%%%%%%%%%%%%%%%%%%%%%%%%%%%%%%%%%%%%%%%%%%%%%%%%%%%%%%%%%%%%%%%%%%%%%%%%%%%%%%%%%%%%%%%%%%%%%%%%%%%%%%%%%%%%%%%%%%%%%%%%%%%%%%%%%%%%%%%%%%%%%%%

\section{Preliminaries}

\subsubsection{Conventions} We let $\bk$ be a field. We consider $\mZ\times \mZ$ as a partially ordered set where $(m,n)\le (a,b)$ if and only if $m\le a$ and $n\le b$.

We refer to \cite{EGNO} for background on monoidal categories. We call a $\bk$-linear symmetric monoidal category $(\cC,\otimes,\unit)$ with bilinear tensor product a \emph{tensor category}, and a $\bk$-linear symmetric monoidal functor a \emph{tensor functor}.

\subsection{Three flavours of monoidal categories and ideals}

\subsubsection{} Without additional qualifiers, a \emph{tensor ideal} in a tensor category refers to an ideal in the additive sense that is closed under tensor products with arbitrary objects (equivalently morphisms). For a tensor ideal $\cI$ in a tensor category $\cC$ we can consider the quotient category $\cC/\cI$ which has the same objects but the appropriate quotients for morphism spaces. 

\subsubsection{} An essentially small tensor category $(\cC,\otimes,\unit)$  will be called a $\bk$-RSM (rigid symmetric monoidal) category if it is idempotent complete and rigid. The latter means that every object $X\in\cC$ is rigid: it has a monoidal dual $(X^\ast,\ev_X,\coev_X)$, with (co)evaluation morphisms $\ev_X:X^\ast\otimes X\to\unit$ and $\coev_X:\unit\to X\otimes X^\ast$ satisfying the usual relations. Using the braiding $\sigma$, we can then define the \emph{categorical dimension} of $X$ as
\[\dim(X)=\ev_X\circ \sigma_{X,X^\ast}\circ\coev_X\in \End(\unit).\]

It is well-known that a $\bk$-RSM category for which $\bk\to\End(\unit)$ is an isomorphism has a unique maximal tensor ideal, see \cite{EO} and references therein, and the corresponding quotient is a $\bk$-RSM category known as the \emph{semisimplification}, since it is semisimple in favourable circumstances.

\subsubsection{} For a tensor category for which the underlying additive category is abelian (but not necessarily with exact tensor product), a \emph{Serre tensor ideal} is a Serre subcategory that is closed under taking tensor products with arbitrary objects. For an abelian tensor category $\cC$ and a Serre tensor ideal $I$, we can consider the Serre quotient $\cC/\hspace{-1mm}/I$. Under suitable hypotheses, which will be satisfied in the setting relevant to the Nerves of Steel Conjecture by \cite[Proposition~2.13]{BKS}, this is again an abelian tensor category, for which the exact quotient functor $\cC\to\cC/\hspace{-1mm}/I$ becomes a tensor functor.

\subsubsection{} A \emph{tensor-triangulated category} $\cK$ is a tensor category that is simultaneously a triangulated category such that the tensor product is triangulated in each variable. A \emph{thick tensor ideal} of a tensor-triangulated category is a thick, triangulated full subcategory that is closed under taking tensor products with arbitrary objects. The prime thick tensor ideals form a topological space $\Spc\cK$, see \cite{Balmer05}, the \emph{Balmer spectrum}.

In \cite{Balmer20} the topological space $\Spc^{\mathrm{h}}\cK$ of \emph{homological primes} was introduced, as the collection of maximal Serre tensor ideals in the Freyd envelope of $\cK$, with a continuous (inverse image) map
\begin{equation}\label{eq:twospec}
\Spc^{\mathrm{h}}\cK\;\to\; \Spc\cK.
\end{equation}
By \cite[Corollary~3.9]{Balmer20}, the map in \eqref{eq:twospec} is surjective if $\cK$ is rigid.

\subsection{The Nerves of Steel Conjecture}

For a precise formulation of the conjecture, we adopt the framework from the non-symmetric generalisation in \cite{NVY}.

\begin{definition}\label{def:big}
A rigidly-compactly generated tensor-triangulated category is a tensor-triangulated category $\cL$ such that
\begin{enumerate}
\item $\cL$ admits arbitrary set-indexed coproducts;
\item the full subcategory $\cL^c$ of compact objects (those $X$ for which $\Hom(X,-)$ commutes with coproducts) is essentially small and generates $\cL$: for every $0\not=M\in\cL$ there is $X\in\cL^c$ with $\Hom(X,M)\not=0$;
\item the tensor product commutes with coproducts;
\item $\cL^c$ is a rigid monoidal subcategory.
\end{enumerate}
Note that (4) is equivalent to demanding that rigid objects and compact objects coincide.
\end{definition}
The \emph{Nerves of Steel Conjecture} then states that for every $\cK$ of the form $\cL^c$ with $\cL$ as above, the map~\eqref{eq:twospec} is a bijection, see \cite[Conjecture~5.6]{NVY}, or equivalently a homeomorphism, see \cite[Proposition~4.5]{BHS}.

\begin{example}
Let $\cA$ be a $\bk$-RSM category. Then its bounded homotopy category $\K^b(\cA)$ is a rigid tensor-triangulated category. For completeness, we show that it is of the form relevant to the above formulation of the Nerves of Steel Conjecture. 
Consider the category $\cA^\infty$ of formal set-indexed coproducts in $\cA$, for instance as a full subcategory of the $\bk$-linear presheaf category of $\cA$, which is again a tensor category by letting the tensor product commute with coproducts. Then we define $\cL$ as the smallest localising (full triangulated and closed under coproducts) subcategory of the unbounded homotopy category $\K(\cA^\infty)$ that contains $\K^b(\cA)$. Then $\cL$ satisfies the condition from Definition~\ref{def:big}, in particular $\cL^c=\K^b(\cA)$ by an application of \cite[Lemma~2.2]{N}.
\end{example}

\begin{remark}
Usually, tensor-triangulated categories need not be linear over a field $\bk$, so the Nerves of Steel Conjecture actually refers to a much broader class of categories. Since our counterexample will be $\bk$-linear, we work in this more specific framework from the start.
\end{remark}

%%%%%%%%%%%%%%%%%%%%%%%%%%%%%%%%%%%%%%%%%%%%%%%%%%%%%%%%%%%%%%%%%%%%%%%%%%%%%%%%%%%%%%%%%%%%%%%%%%%%%%%%%%%%%%%%%%%%%%%%%%%%%%%%%%%%%%%%%%%%%%%%%%%%%%%%%%%%%%%%%%%%%%%%%%%%%%%%%%%%%%%%%%%%%%%%%%%%%%%%%%%%%%%%%%%%%%%%%%%%%%%%%%%%%%%%%%%%%%%%%%%%%%%%%%%%%%%%%%%%%%%%%%%%%%%%%%%%%%%%%%%%%%%%%%%%%%%%%%%%%%%%%%%%%%%%%%%%%%%%%%%%%%%%%%%%%%%%%%%%%%%%%%%%%%%%%%%%%%%%%%%%%%%%%%%%%%%%%%%%%%%%%%%%%%%%%%%%%%%%%%%%%%%%%%%%%%%%%%%%%%%%%%%%%%%%%%%%%%%%%%%%%%%%%%%%%%%%%%%%%%%%%%%%%%%%%%%%%%%%%%%%%%%%%%%%%%%%%%%%%%%%%%%%%%%%%%%%%%%%%%%%%%%%%%%%%%%%%%%%%%%%%%%%%%%%%%%%%%%%%%%%%%%%%%%%%%%%%%%%%%%%%%%%%%%%%%%%%%%%%%%%%%%%%%%%%%%%%%%%%%%%%%%%%%%%%%%%%%%%%%%%%%%%%%%%%%%%%%%%%%%%%%%%%%%%%%%%%%%%%%%%%%%%%%%%%%%%%%%%%%%%%%%%%%%%%%%%%%%%%%%%%%%%%%%%%%%%%%%%%%%%%%%%%%%%%%%%%%%%%%%%%%%%%%%%%%%%%%%%%%%%%%%%%%%%%%%%%%%%%%%%%%%%%%%%%%%%%%%%%%%%%%%%%%%%%%%%%%%%%%%%%%%%%%%%%%%%%%%%%%%%%%%%%%%%%%%%%%%%%%%%%%%%%%%%%%%%%%%%%%%%%%%%%%%%%%%%%%%%%%%%%%%%%%%%%%%%%%%%%%%%%%%%%%%%%%%%%%%%%%%%%%%%%%%%%%%%%%%%%%%%%%%%%%%%%%%%%%%%%%%%%%%%%%%%%%%%%%%%%%%%%%%%%%%%%%%%%%%%%%%%%%%%%%%%%%%%%%%%%%%%%%%%%%%%%%%%%%%%%%%%%%%%%%%%%%%%%%%%%%%%%%%%%%%%%%%%%%%%%%%%%%%%%%%%%%%%%%%%%%%%%%%%%%%%%%%%%%%%%%%%%%%%%%%%%%%%%%%%%%%%%%%%%%%%%%%%%%%%%%%%%%%%%%%%%%%%%%%%%%%%%%%%%%%%%%%%%%%%%%%%%%%%%%%%%%%%%%%%%%%%%%%%%%%%%%%%%%%%%%%%%%%%%%%%%%%%%%%%%%%%%%%%%%%%%%%%%%%%%%%

\section{A counterexample}

In this section we let $\bk$ be a field of characteristic 0 and choose $t\in\bk\backslash \mZ$. 

\subsection{Construction of the additive category}

The construction of additive monoidal categories, in particular rigid or symmetric ones, via generators and relations is ubiquitous in the literature. Here we employ the formalism and terminology from \cite{M}.

\subsubsection{} Fix the `type' $(1,0), (1,0)$. Then \cite[\S 3]{M} constructs the universal tensor category $\cC_{\mathrm{univ}}$ associated to it. It has generating object $W$ and morphisms $a,b:\unit\to W$. The following is a special case of \cite[Proposition~3.1]{M}.

\begin{lemma}\label{lem:M}
For any tensor category $\cB$, the assignment that sends a tensor functor $F:\cC_{\univ}\to\cB$ to the data $F(W)\xleftarrow{F(a)}\unit\xrightarrow{F(b)}F(W)$ produces a bijection between the set of isomorphism classes of tensor functors $\cC_{\univ}\to\cB$ and isomorphism classes of diagrams $\unit\rightrightarrows X$ in $\cB$ with rigid $X$.
\end{lemma}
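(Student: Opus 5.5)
The plan is to recall the explicit construction of $\cC_{\univ}$ from \cite[\S 3]{M} and verify the universal property directly; the lemma is then the specialisation of \cite[Proposition~3.1]{M} to the type $(1,0),(1,0)$.

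The construction proceeds in layers. First one forms the free rigid symmetric monoidal ($\bk$-linear) category on one object $W$. Its objects are words in $W$ and $W^\ast$. Its morphisms are $\bk$-linear combinations of oriented Brauer-type diagrams, namely walled Brauer diagrams built from the braiding and the (co)evaluations. Then one freely adjoins two morphisms $a,b:\unit\to W$. In \cite{M} this is realised diagrammatically: morphisms are linear combinations of diagrams that may contain univalent vertices labelled $a$ or $b$ (and, dually, $a^\ast,b^\ast:W^\ast\to\unit$ obtained through $\ev$), taken modulo the isotopy/symmetric monoidal relations. Finally one takes the additive and idempotent completion, if required by the conventions of \cite{M}. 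The bijection of the lemma asserts exactly the universal property of this presentation.

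The proof then has three steps.

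\begin{enumerate}
\item \emph{Existence.} Given a diagram $\unit\rightrightarrows X$ in $\cB$ with $X$ rigid, fix a dual $(X^\ast,\ev_X,\coev_X)$. Define $F$ on generating objects by $W\mapsto X$ and $W^\ast\mapsto X^\ast$, and on generating morphisms by sending $a,b$ to the given maps and the structural cups, caps and crossings to $\coev_X$, $\ev_X$ and $\sigma$. Well-definedness amounts to checking that every defining relation of $\cC_{\univ}$ (the zigzag identities, naturality of the braiding with respect to $a,b$, and the symmetric monoidal coherences) holds in $\cB$. The monoidal structure is then extended additively and through idempotent completion.
\item \emph{Uniqueness up to isomorphism.} A tensor functor $F$ is determined up to monoidal isomorphism by $F(W)$, $F(a)$ and $F(b)$. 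The reason is that $F(W^\ast)$ is canonically a dual of $F(W)$, and duals are unique up to unique isomorphism compatible with $\ev$ and $\coev$. Moreover every morphism of $\cC_{\univ}$ is a composite of tensor products of generators. Any monoidal natural isomorphism $F\cong F'$ evaluated at $W$ gives an isomorphism of diagrams; conversely, an isomorphism of diagrams $X\cong X'$ induces a monoidal isomorphism of the associated functors by transport along duals.
\item \emph{Well-defined on isomorphism classes.} Combining the two steps shows that the assignment is well defined on isomorphism classes, surjective, and injective.
\end{enumerate}

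The main obstacle is step (1): proving that the relations imposed in \cite{M} are exactly those forced by the symmetric monoidal and rigid axioms, so that $F$ is well defined. This is a coherence statement for free symmetric rigid categories, of Kelly–Laplaza type. I would simply import it from \cite[Proposition~3.1]{M} rather than redo it, since our type is a special case there.
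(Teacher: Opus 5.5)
Your proposal is correct and follows the paper's route. The paper gives no argument beyond noting that the lemma is the special case of \cite[Proposition~3.1]{M} for the type $(1,0),(1,0)$, and that is exactly what you invoke for the coherence and well-definedness step. Your existence and uniqueness sketch, which transports duals and checks naturality on the generators $a$, $b$, $\ev$, $\coev$ and $\sigma$, is a faithful unpacking of what that proposition provides.
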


\subsubsection{} We define the $\bk$-RSM category $\cC_t$ as the idempotent completion of the quotient of $\cC_{\univ}$ by the tensor ideal generated by the morphisms $\dim(W)-t\cdot \id_{\unit}$ and $a\otimes b$. We denote the images of $W,a,b$ in $\cC_t$ by $V_t,f,g$. The following corollary is thus immediately inherited from Lemma~\ref{lem:M}.

\begin{corollary}\label{cor:uni}
For a $\bk$-RSM category $\cB$, the assignment that sends a tensor functor $F:\cC_t\to\cB$ to the data $F(V_t)\xleftarrow{F(f)}\unit\xrightarrow{F(g)}F(V_t)$ produces a bijection between the set of isomorphism classes of tensor functors $\cC_t\to\cB$ and isomorphism classes of diagrams $x_1,x_2:\unit\rightrightarrows X$ in $\cB$ where $\dim(X)=t\cdot\id_{\unit}$ and $x_1\otimes x_2=0$.
\end{corollary}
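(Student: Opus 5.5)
We deduce the corollary from Lemma~\ref{lem:M} by combining two standard universal properties: that of a quotient by a tensor ideal, and that of idempotent completion. Write $\cI\subset\cC_{\univ}$ for the tensor ideal generated by $\dim(W)-t\cdot\id_{\unit}$ and $a\otimes b$, and $\cC'=\cC_{\univ}/\cI$. Then $\cC_t$ is the idempotent completion of $\cC'$.

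\textbf{Step 1 (the quotient).} A tensor functor $F:\cC_{\univ}\to\cB$ factors through $\cC'$ if and only if $F$ sends $\cI$ to zero. Since $F$ is $\bk$-linear and monoidal, its kernel on morphisms is a tensor ideal. Hence $F$ kills $\cI$ exactly when it kills the two generators, that is, when
\[
\dim(F(W))=t\cdot\id_{\unit}\qquad\text{and}\qquad F(a)\otimes F(b)=0.
\]
Here we use that tensor functors preserve duals and the braiding, so $F(\dim W)=\dim F(W)$, and that $F$ respects the identification $\unit\otimes\unit\cong\unit$. Such a factorisation is unique up to unique isomorphism, because $\cC_{\univ}\to\cC'$ is the identity on objects and full. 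Consequently, isomorphism classes of tensor functors $\cC'\to\cB$ correspond to isomorphism classes of tensor functors $\cC_{\univ}\to\cB$ killing $\cI$. One should check that monoidal natural isomorphisms also descend. This holds because $\cC_{\univ}\to\cC'$ is full and bijective on objects.

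\textbf{Step 2 (rigidity).} In $\cB$, every object is rigid because $\cB$ is a $\bk$-RSM category. So the rigidity condition on $X$ in Lemma~\ref{lem:M} is automatic, and the lemma's bijection restricts to the diagrams $x_1,x_2:\unit\rightrightarrows X$ with $\dim X=t\cdot\id_{\unit}$ and $x_1\otimes x_2=0$.

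\textbf{Step 3 (idempotent completion).} Since $\cB$ is idempotent complete, restriction along $\cC'\to\cC_t$ gives an equivalence between tensor functors $\cC_t\to\cB$ and tensor functors $\cC'\to\cB$, and likewise on monoidal natural isomorphisms. Every functor extends uniquely up to isomorphism via images of idempotents. The monoidal and symmetric structure extend because the tensor product on the Karoubi envelope is defined componentwise on pairs $(X,e)$.

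\textbf{Main obstacle.} The only non-formal point is Step 3: the symmetric monoidal structure on the extension, together with the claim that monoidal natural isomorphisms correspond. I would handle this by citing the $2$-universal property of the Karoubi envelope for symmetric monoidal additive categories. Everything else is bookkeeping, and composing the three bijections yields the statement.
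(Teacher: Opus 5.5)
Your proposal is correct and takes the same approach as the paper. The paper gives no argument beyond remarking that the corollary is immediately inherited from Lemma~\ref{lem:M}; you have spelled out the standard steps this leaves implicit: the kernel of a tensor functor is a tensor ideal, so factoring through the quotient only requires killing the two generators; rigidity of $X$ is automatic in $\cB$; and the $2$-universal property of the idempotent completion applies because $\cB$ is idempotent complete.
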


\subsubsection{}\label{indecomposables3} We can repeat the above procedure but for the `empty type', in which case we recover the universal $\bk$-RSM categories from \cite[\S 10]{Del07}. Concretely, we will use the $\bk$-RSM categories $(\Rep \GL)_t$ and $(\Rep \GL)_{t-1}$ which are the universal $\bk$-RSM categories on objects $X_t,X_{t-1}$ of categorical dimensions $t\cdot\id_{\unit},(t-1)\cdot\id_{\unit}$. Since $t\not\in\mZ$, these are semisimple abelian categories, see \cite[Th\'eor\'eme~10.5]{Del07}, and the tensor unit $\unit$ is simple with $\bk=\End(\unit)$.

By \cite[Theorem~4.6.2]{CW}, the indecomposable (=simple) objects in $(\Rep \GL)_{t}$ are naturally labelled as $X_{\lambda,\mu}$, where $\lambda,\mu$ are arbitrary partitions. Here $X_{\lambda,\mu}$ is a direct summand of $X_t^{\otimes |\lambda|}\otimes (X_t^\ast)^{\otimes |\mu|}$, but not of $X_t^{\otimes m}\otimes (X_t^\ast)^{\otimes n}$ unless $(m,n)\ge (|\lambda|,|\mu|)$.

\begin{lemma}\label{lem:section}
\begin{enumerate}
\item The semisimplification of $\cC_t$ is $(\Rep \GL)_t$ with $\cC_t\to(\Rep \GL)_t$ given by $V_t\to X_t$ and $f,g\mapsto 0$.
\item The tensor functor $\Theta:(\Rep \GL)_t\to\cC_t$, $X_t\mapsto V_t$ is a tensor splitting of the semisimplification functor. 
\item The pseudo-abelian category $\cC_t$ is Krull-Schmidt. Every indecomposable object in~$\cC_t$ is isomorphic to $D_{\lambda,\mu}:=\Theta(X_{\lambda,\mu})$, for some partitions $\lambda,\mu$. Moreover, $\dim_{\bk}\End(D_{\lambda,\mu})=1$.
\end{enumerate}
\end{lemma}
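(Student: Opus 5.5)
The plan is to work in $\cC_{\univ}$ with a grading that counts occurrences of $a$ and $b$, and to show that this count is determined by the source and target objects. All three parts then follow by comparing $\cC_t$ with $(\Rep\GL)_t$ through two functors: $P:\cC_t\to(\Rep\GL)_t$ and $\Theta:(\Rep\GL)_t\to\cC_t$.

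\emph{Construction of the functors.} By Corollary~\ref{cor:uni} applied to $\cB=(\Rep\GL)_t$ and the diagram $0,0:\unit\rightrightarrows X_t$, we get a tensor functor $P:\cC_t\to(\Rep\GL)_t$ with $V_t\mapsto X_t$ and $f,g\mapsto 0$. Since $\dim V_t=t\cdot\id_\unit$ and $\cC_t$ is a $\bk$-RSM category, the universal property of $(\Rep\GL)_t$ from~\ref{indecomposables3} gives $\Theta$ with $X_t\mapsto V_t$. Again by that universal property, $P\circ\Theta\cong\id$, because the composite sends $X_t$ to $X_t$.

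\emph{Key combinatorial step.} Morphisms in $\cC_{\univ}$ are spanned by string diagrams built from braidings, $\ev,\coev$ of $W$, and the generators $a,b$. I will count strand endpoints in a diagram $W^{\otimes m}\otimes (W^\ast)^{\otimes n}\to W^{\otimes p}\otimes (W^\ast)^{\otimes q}$.
\begin{itemize}
\item $W$-strands start at the source, at $a$, $b$ or $\coev$, and end at the target or at $\ev$.
\item $W^\ast$-strands start at the source or at $\coev$, and end at the target or at $\ev$.
\end{itemize}
Subtracting the two balance equations gives $\#a+\#b=(p-q)-(m-n)$. So every such diagram contains exactly this many copies of $a,b$. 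The relations $a\otimes b$ and $\dim W-t$ are homogeneous for this count, so the statement descends to $\cC_t$. Two consequences:
\begin{itemize}
\item $\End_{\cC_t}(\unit)$ and $\End(V_t^{\otimes m}\otimes (V_t^\ast)^{\otimes n})$ are spanned by diagrams without $f,g$.
\item Every diagram without $f,g$ lies in the image of $\Theta$, since it is built only from the rigid symmetric structure on $V_t$.
\end{itemize}
Hence $\End(\unit)=\bk$, and $\Theta$ is surjective on these endomorphism spaces. Since $P\circ\Theta\cong\id$ forces $\Theta$ to be faithful, $\Theta$ is bijective on them.

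\emph{Part (3).} Every object of $\cC_t$ is a summand of some $\Theta(Y)$. Idempotents of $\End(\Theta(Y))=\Theta(\End Y)$ come from $(\Rep\GL)_t$, so every summand is $\Theta$ of a summand of $Y$. The indecomposables are therefore the $D_{\lambda,\mu}$, and $\End(D_{\lambda,\mu})\cong\End(X_{\lambda,\mu})=\bk$ by the previous step. Hom spaces are finite dimensional, so $\cC_t$ is Krull--Schmidt.

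\emph{Parts (1) and (2).} Once (1) holds, (2) is immediate from $P\circ\Theta\cong\id$. For (1) we must show that $\ker P$ equals the maximal tensor ideal; this requires $\End(\unit)=\bk$, established above.
\begin{itemize}
\item \emph{$\ker P$ is contained in the maximal ideal.} $\ker P$ is the span of diagrams containing at least one $f$ or $g$. For such $h$ and any $k$, $\mathrm{tr}(h\circ k)$ is an endomorphism of $\unit$ containing $f$ or $g$, hence zero by the counting argument.
\item \emph{The maximal ideal is contained in $\ker P$.} If $P(h)\neq 0$, semisimplicity of $(\Rep\GL)_t$ with $\End(\unit)=\bk$ gives a nondegenerate trace pairing, so there is $k'$ with $\mathrm{tr}(P(h)k')\neq0$. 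Lifting $k'$ as $\Theta(k')$ (up to the identification $P\Theta\cong\id$) shows $h$ is not negligible, because $P$ preserves traces.
\end{itemize}
Since $P$ is full, by fullness of $P\circ\Theta\cong\id$, and essentially surjective, the quotient by $\ker P$ is $(\Rep\GL)_t$.

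I expect the main obstacle to be making the counting argument rigorous at the level of the presentation in~\cite{M}. One must check that morphism spaces are genuinely spanned by such diagrams and that the count is compatible with idempotent completion. I would do this by verifying that the generating morphisms and relations in~\cite[\S3]{M} are all homogeneous for the grading $\#a+\#b$.
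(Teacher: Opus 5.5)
Your route is essentially the paper's. Your endpoint count is exactly the paper's grading: degree $r-m+n-s$ on $\Hom(W^{\otimes m}\otimes(W^\ast)^{\otimes n},W^{\otimes r}\otimes(W^\ast)^{\otimes s})$. From it, both arguments derive $\End_{\cC_t}(\unit)=\bk$, bijectivity of $\Theta$ on endomorphism algebras of mixed tensor powers, and the semisimplification via a full, essentially surjective tensor functor onto the semisimple $(\Rep\GL)_t$. Your trace-pairing argument that $\ker P$ is the negligible ideal just spells out the paper's ``since the target is semisimple''.

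There is one false step in Part (3). The identity $\End(\Theta(Y))=\Theta(\End Y)$ holds only when $Y$ is a single mixed tensor power $X_t^{\otimes m}\otimes(X_t^\ast)^{\otimes n}$, which is all your counting proves; it fails for arbitrary $Y$. For $Y=\unit\oplus X_t$, the algebra $\End(\unit\oplus V_t)$ contains $f\in\Hom(\unit,V_t)$. This $f$ is nonzero, since $\Phi_0(f)\neq 0$, whereas $\Hom(\unit,X_t)=0$. Accordingly, $\left(\begin{smallmatrix}1&0\\ f&0\end{smallmatrix}\right)$ is an idempotent endomorphism of $\unit\oplus V_t$ that is not of the form $\Theta(e)$. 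Your conclusion survives, but not by the stated reasoning.

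The repair is to reverse your order, as the paper does:
\begin{enumerate}
\item Deduce finite-dimensionality of Hom spaces from the counting. A fixed number of $f,g$ leaves finitely many diagrams modulo closed loops, and closed loops evaluate to $t$. You currently assert this without justification.
\item Since $\cC_t$ is idempotent complete, it is then Krull--Schmidt.
\item Every indecomposable is therefore a summand of a single mixed tensor power, where your bijectivity of $\Theta$ applies.
\end{enumerate}

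Your proof of fullness of $P$ in Part (1) uses that every object is isomorphic to some $\Theta(Y)$, so it depends on this repaired Part (3). The paper instead gets fullness directly: all morphisms in $(\Rep\GL)_t$ are built from braidings and (co)evaluations.
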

\begin{proof}
We start by a description of morphism spaces. By construction of $\cC_{\mathrm{univ}}$ in \cite[\S 3]{M}, all objects are direct sums of objects $W^{\otimes m}\otimes (W^\ast)^{\otimes n}$. The morphism spaces are spanned by diagrams in terms of $a$ and $b$. We can put a non-negative grading on these spaces where the degree is the total number of occurrences of $a$ and $b$. We can then observe that the space 
\[\Hom(W^{\otimes m}\otimes (W^\ast)^{\otimes n},W^{\otimes r}\otimes (W^\ast)^{\otimes s})\] lives in degree $r-m+n-s$ and is of finite rank (since the degree places a bound on the number of morphisms $a$ and $b$ that can be used) over the polynomial algebra $\End_{\cC_{\univ}}(\unit)= \bk[\dim W]$. In particular, in the quotient of $\cC_{\univ}$ by $\dim W- t$, so certainly in $\cC_t$, the morphism spaces are finite-dimensional and $\End_{\cC_t}(\unit)=\bk$.

By $\End_{\cC_t}(\unit)=\bk$, the semisimplification of $\cC_t$ is well-defined. The tensor functor $\cC_t\to(\Rep \GL)_t$ as described in (1) via Corollary~\ref{cor:uni} is easily seen to be essentially surjective and full (since all morphisms in $(\Rep \GL)_t$ come from braiding and (co)evaluation). Since the target is semisimple, the conclusion in (1) follows.

Part (2) follows from the universal property of $(\Rep\GL)_t$, see \cite[Proposition~10.3]{Del07}.

As a consequence of the first paragraph, $\cC_t$ is Krull-Schmidt, so by construction, every indecomposable object must be a summand of a mixed tensor power $V_t^{\otimes m}\otimes (V_t^\ast)^{\otimes n}$.
Furthermore, by the degree consideration in the first paragraph, the braiding and evaluation $x\mapsto \dim W$ produce isomorphisms $\bk S_n\otimes \bk[x]\to\End(W^{\otimes n})$ for $\cC_{\univ}$, so that  $\bk S_n\to\End(V_t^{\otimes n})$ is an epimorphism. By part (1) and adjunction, it then follows that $\Theta$ produces isomorphisms
\[\End(X_t^{\otimes m}\otimes (X_t^\ast)^{\otimes n})\xrightarrow{\sim}\End(V_t^{\otimes m}\otimes (V_t^\ast)^{\otimes n}).\]
The decompositions into indecomposables in $\cC_t$, and their endomorphism algebras, are thus inherited from $(\Rep\GL)_t$.
\end{proof}

\subsubsection{} 
The classification of indecomposable objects in $(\Rep \GL)_{t-1}$ is identical to the one in $(\Rep \GL)_{t}$, so to avoid confusion we denote the corresponding objects by $Y_{\lambda,\mu}$.

We use Corollary~\ref{cor:uni} to define the tensor functor 
\begin{equation}\label{eq:Phi0}
\Phi_0:\cC_t\to (\Rep\GL)_{t-1},\quad V_t\mapsto X_{t-1}\oplus \unit, \quad f\mapsto (0,\id_{\unit}),\quad g\mapsto 0.
\end{equation}

\begin{lemma}\label{lem:lower}
Under the tensor functor $\Phi_0$,
the object $D_{\lambda,\mu}$ is sent to the direct sum of $Y_{\lambda,\mu}$ with simple objects of the form $Y_{\nu,\kappa}$ for $(|\nu|,|\kappa|)<(|\lambda|,|\mu|)$.

\end{lemma}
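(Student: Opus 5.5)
We compute $\Phi_0(D_{\lambda,\mu})$ by comparing it with the composite $\Phi_0\circ\Theta$. The functor $\Phi_0\circ\Theta:(\Rep\GL)_t\to(\Rep\GL)_{t-1}$ is a tensor functor sending $X_t$ to $X_{t-1}\oplus\unit$; this object has dimension $(t-1)+1=t$, consistent with the universal property of $(\Rep\GL)_t$. By definition $D_{\lambda,\mu}=\Theta(X_{\lambda,\mu})$, so $\Phi_0(D_{\lambda,\mu})\cong(\Phi_0\Theta)(X_{\lambda,\mu})$. The statement is therefore really about the restriction-type functor $(\Rep\GL)_t\to(\Rep\GL)_{t-1}$, $X_t\mapsto X_{t-1}\oplus\unit$, which interpolates the restriction $\GL_n\to\GL_{n-1}$. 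The morphisms $f,g$ play no role here.

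\textbf{Step 1 (lower terms).} Write $m=|\lambda|$ and $n=|\mu|$. Since $X_{\lambda,\mu}$ is a summand of $X_t^{\otimes m}\otimes(X_t^\ast)^{\otimes n}$, its image is a summand of
\[(X_{t-1}\oplus\unit)^{\otimes m}\otimes(X_{t-1}^\ast\oplus\unit)^{\otimes n}.\]
Expanding the binomials, this is a direct sum of objects $X_{t-1}^{\otimes a}\otimes(X_{t-1}^\ast)^{\otimes b}$ with $(a,b)\le(m,n)$. By the description of indecomposables recalled in \ref{indecomposables3}, every simple summand $Y_{\nu,\kappa}$ of such an object satisfies $(|\nu|,|\kappa|)\le(a,b)\le(m,n)$. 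Moreover, if $(|\nu|,|\kappa|)=(m,n)$, the summand occurs only inside the top term $X_{t-1}^{\otimes m}\otimes(X_{t-1}^\ast)^{\otimes n}$.

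\textbf{Step 2 (the top term is exactly $Y_{\lambda,\mu}$ once).} We must show that the top-degree part of $\Phi_0\Theta(X_{\lambda,\mu})$ is exactly $Y_{\lambda,\mu}$ with multiplicity one. I would use the filtration of $\End(X_t^{\otimes m}\otimes(X_t^\ast)^{\otimes n})$, the walled Brauer algebra $B_{m,n}(t)$, by the ideal $J$ spanned by diagrams with at least one cap or cup. Following \cite{CW}, $X_{\lambda,\mu}$ is cut out by a primitive idempotent $e$. Modulo $J$, this idempotent reduces to $e_\lambda\otimes e_\mu\in\bk S_m\otimes\bk S_n$, a product of Young symmetrizers. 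The simple $X_{\lambda,\mu}$ is, up to lower summands, characterised as the unique summand not factoring through lower mixed tensor powers.

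Apply $\Phi_0\Theta$ and project onto the top component $X_{t-1}^{\otimes m}\otimes(X_{t-1}^\ast)^{\otimes n}$. The symmetric group parts act there by the same permutations. The cap/cup diagrams, however, do not simply vanish on the top component. The main obstacle is to control what they do: they contribute maps passing through $\ev$ and $\coev$ on $X_{t-1}$, plus cross terms through the $\unit$-summand. I would handle this by working in the split Grothendieck ring instead of with idempotents.

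Concretely, $\Phi_0\Theta$ induces a ring map $K_0((\Rep\GL)_t)\to K_0((\Rep\GL)_{t-1})$, and both rings carry the filtration by $(|\nu|,|\kappa|)$. In $K_0$ the class $[X_{\lambda,\mu}]$ equals $[S_\lambda X_t\otimes S_\mu X_t^\ast]$ modulo strictly lower terms. This follows from the same characterisation of $X_{\lambda,\mu}$ as the summand not factoring through lower powers. Under $\Phi_0\Theta$, the object $S_\lambda(X_{t-1}\oplus\unit)\otimes S_\mu(X_{t-1}^\ast\oplus\unit)$ decomposes by the branching rule for Schur functors as $S_\lambda X_{t-1}\otimes S_\mu X_{t-1}^\ast$ plus terms built from strictly smaller tensor powers. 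The former equals $[Y_{\lambda,\mu}]$ plus lower terms.

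Hence the top-degree part of $[\Phi_0 D_{\lambda,\mu}]$ is exactly $[Y_{\lambda,\mu}]$. Since $(\Rep\GL)_{t-1}$ is semisimple, the classes of simples form a basis of $K_0$, so we can read off that $Y_{\lambda,\mu}$ occurs once. Combining with Step 1, all other simple summands $Y_{\nu,\kappa}$ satisfy $(|\nu|,|\kappa|)<(|\lambda|,|\mu|)$, which proves the lemma.
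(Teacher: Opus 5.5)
Your proof is correct and is essentially the paper's argument. The paper also reduces to $\Phi_0\circ\Theta$ and handles $X_{\lambda,\varnothing}$ via the primitive idempotent in $\bk S_n\cong\End(X_{t-1}^{\otimes n})$, which is your Schur-functor branching for $S_\lambda(X_{t-1}\oplus\unit)$; it then uses that $X_{\lambda,\varnothing}\otimes X_{\varnothing,\mu}$ equals $X_{\lambda,\mu}$ up to lower summands, plus induction on $(|\lambda|,|\mu|)$, which is exactly your triangularity argument in the split Grothendieck ring (so the walled Brauer idempotent detour you abandon is unnecessary).
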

\begin{proof}
By definition of the objects $D_{\lambda,\mu}$, we can instead prove that
under the tensor functor
\[\Phi_0\circ\Theta:\;(\Rep \GL)_{t}\to(\Rep \GL)_{t-1}, \quad X_t\mapsto X_{t-1}\oplus\unit,\]
the object $X_{\lambda,\mu}$ is sent to the direct sum of $Y_{\lambda,\mu}$ with simple objects of the form $Y_{\nu,\kappa}$ for $(|\nu|,|\kappa|)<(|\lambda|,|\mu|)$. We start with the special case $\mu=\varnothing$. Now, using the fact that $X_{\omega,\varnothing}$ appears in $X_t^{\otimes |\omega|}$, and not in $X_t^{m}$ for $m<|\omega|$ (and the corresponding statements in $(\Rep \GL)_{t-1}$) and the fact that $X_{\lambda,\varnothing}$ is carved out by a primitive idempotent in
\[\End(X_t^{\otimes n})\cong\bk S_n\cong\End(X_{t-1}^{\otimes n}),\qquad\mbox{for }n=|\lambda|,\] it follows that $X_{\lambda,\varnothing}$ is sent to a direct sum of $Y_{\lambda,\varnothing}$ with $Y_{\omega,\varnothing}$ with $|\omega|<|\lambda|$. The case for $X_{\varnothing,\mu}$ follows identically. The general case then follows for instance from the fact that $X_{\lambda,\varnothing}\otimes X_{\varnothing,\mu}$ equals $X_{\lambda,\mu}$ up to lower summands, see \cite[Corollary~7.1.2]{CW}, and induction along $(|\lambda|,|\mu|)$.
\end{proof}

\subsection{The homotopy category}

From the $\bk$-RSM category $\cC_t$ we build the rigid tensor-triangulated category $\K^b(\cC_t)$, which will be our counterexample to the Nerves of Steel Conjecture. The tensor functor \eqref{eq:Phi0} then induces a tensor-triangulated (and homological) functor
\begin{equation}\label{eq:Phi}\Phi: \K^b(\cC_t)\to \K^b((\Rep \GL)_{t-1}).\end{equation}

\begin{proposition}\label{prop:00}
The only object sent to $0$ under $\Phi$ is $0$.
\end{proposition}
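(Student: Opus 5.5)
Let $M$ be a bounded complex in $\K^b(\cC_t)$ with $\Phi(M)\simeq 0$; we must show $M\simeq 0$. By Lemma~\ref{lem:section}(3), $\cC_t$ is Krull--Schmidt with indecomposables $D_{\lambda,\mu}$, and each has local endomorphism ring (indeed $\End(D_{\lambda,\mu})=\bk$). So we may assume $M$ is a \emph{minimal} complex. This means that in each differential $d^i:M^i\to M^{i+1}$, written as a matrix with respect to decompositions into indecomposables, no entry between two copies of the same $D_{\lambda,\mu}$ is an isomorphism. Equivalently, every such entry lies in the radical. A nonzero minimal complex is not contractible, so it suffices to show that a nonzero minimal $M$ has $\Phi(M)\not\simeq 0$.

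Choose $(\lambda,\mu)$ such that $D_{\lambda,\mu}$ occurs in some $M^i$ and $(|\lambda|,|\mu|)$ is maximal among all pairs occurring anywhere in $M$. Maximality is taken in the partial order on $\mZ\times\mZ$, and such a pair exists since $M$ is bounded. Apply $\Phi$ and record the multiplicity of $Y_{\lambda,\mu}$ in each term. By Lemma~\ref{lem:lower}, $Y_{\lambda,\mu}$ occurs in $\Phi_0(D_{\nu,\kappa})$ only if $(|\lambda|,|\mu|)\le(|\nu|,|\kappa|)$. By maximality, this forces $(|\nu|,|\kappa|)=(|\lambda|,|\mu|)$. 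In that case it occurs exactly when $(\nu,\kappa)=(\lambda,\mu)$, and then with multiplicity one: for equal sizes the lower summands of $\Phi_0(D_{\nu,\kappa})$ all have strictly smaller size and so cannot be $Y_{\lambda,\mu}$. Hence the $Y_{\lambda,\mu}$-isotypic part of $\Phi(M)^i$ is $Y_{\lambda,\mu}^{\oplus n_i}$, where $n_i$ is the multiplicity of $D_{\lambda,\mu}$ in $M^i$.

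Since $(\Rep\GL)_{t-1}$ is semisimple, $\Phi(M)\simeq 0$ means that the complex $\Phi(M)$ is exact. Equivalently, for each simple $Y$ the complex of $\bk$-vector spaces $\Hom(Y,\Phi(M))$ is exact. Take $Y=Y_{\lambda,\mu}$. The $Y_{\lambda,\mu}$-component of $\Phi_0(d^i)$ from the copies of $Y_{\lambda,\mu}$ in degree $i$ to those in degree $i+1$ is the image under $\Phi_0$ of the $D_{\lambda,\mu}\to D_{\lambda,\mu}$ matrix block of $d^i$. This block consists of scalars in $\End(D_{\lambda,\mu})=\bk$, and by minimality these scalars vanish. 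Moreover, by maximality and Lemma~\ref{lem:lower}, the other summands $D_{\nu,\kappa}$ contribute no copies of $Y_{\lambda,\mu}$ at all. So $\Hom(Y_{\lambda,\mu},\Phi(M))$ is the complex $\bk^{n_i}$ with zero differentials. Its cohomology is nonzero because some $n_i>0$, which contradicts exactness. Therefore $M\simeq 0$.

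The main obstacle is the bookkeeping in this last step. One must make sure that the $Y_{\lambda,\mu}$-isotypic part of $\Phi(M)$ really is a subcomplex (not merely a collection of graded pieces) whose differential is computed from the $D_{\lambda,\mu}$-blocks alone. In a semisimple category, the isotypic component functor $\Hom(Y_{\lambda,\mu},-)$ makes this automatic, since it is exact and additive. What remains is to check that $\Phi_0$ sends the scalar block to the same scalar on the single copy of $Y_{\lambda,\mu}$. This holds because $\Phi_0$ is $\bk$-linear and $\End(D_{\lambda,\mu})=\bk$, with $\Phi_0$ sending identities to identities.
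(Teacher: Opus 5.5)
Your proof is correct and is essentially the paper's argument: the paper also picks a summand $D_{\lambda,\mu}$ with $(|\lambda|,|\mu|)$ maximal and uses Lemma~\ref{lem:lower} together with $\End(D_{\lambda,\mu})=\bk$ to find a component of the differential between two copies of $D_{\lambda,\mu}$ that is an isomorphism, then removes that pair by Gaussian elimination and iterates on the number of indecomposable summands. You package the same steps as a reduction to a minimal complex followed by a contradiction, and your $\Hom(Y_{\lambda,\mu},-)$ computation spells out the step the paper compresses into ``Hence''.
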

\begin{proof}
Let $A^\bullet$ be a (bounded) complex in $\cC_t$ representing an object in $\K^b(\cC_t)$ sent to zero by $\Phi$. If $A^\bullet$ is not zero will demonstrate that $A^\bullet\cong B^\bullet$ in $\K^b(\cC_t)$ for a complex $B^\bullet$ where $\oplus_i B^i$ has strictly fewer indecomposable direct summands in its expansion in the Krull-Schmidt category $\cC_t$ than $\oplus_iA^i$. Iteration of this observation proves the claim.

Consider a direct summand $D_{\lambda,\mu}$ of some $A^i$ such that $(|\lambda|,|\mu|)$ is maximal among all indecomposable direct summands of $\oplus_jA^j$. By Lemma~\ref{lem:lower}, $D_{\lambda,\mu}$ is sent to $Y_{\lambda,\mu}$ under $\Phi_0$ up to lower summands. Hence $\Phi(A^\bullet)=0$ implies that there must exist a direct summand $D_{\lambda,\mu}$ of $A^{i-1}$ or $A^{i+1}$ such that $A^{i-1}\to A^i$ or $A^i\to A^{i+1}$ composed with the corresponding inclusion and projection is non-zero and thus an isomorphism by Lemma~\ref{lem:section}(3). Without loss of generality, let that summand live in degree $i+1$, so that $A^\bullet$ looks like
\[ \xymatrix{&&& D_{\lambda,\mu}\ar@{=}[r]\ar[rdd]_(.2){\beta}&D_{\lambda,\mu}\\
\cdots\ar[r]^{d^{i-3}}&A^{i-2}\ar[r]^{d^{i-2}}& A^{i-1}\ar[rd]^{d^{i-1}}&\oplus &\oplus & A^{i+2}\ar[r]^{d^{i+2}}&\cdots\\
&&& R^{i}\ar[r]^{d^i}\ar[ruu]^(.2){\alpha}& R^{i+1}\ar[ru]^{d^{i+1}}}\]
for chosen complements $R^i,R^{i+1}$ in $A^{i},A^{i+1}$. Now $A^\bullet$ is isomorphic in $\K^b(\cC_t)$ to the complex
\[\cdots\xrightarrow{d^{i-2}}A^{i-1}\xrightarrow{d^{i-1}}R^i\xrightarrow{d^i-\beta\circ\alpha} R^{i+1}\xrightarrow{d^{i+1}} A^{i+2}\xrightarrow{d^{i+2}}\cdots,\]
which concludes the proof.
\end{proof}

\begin{corollary}\label{cor:local}
The tensor-triangulated category $\K^b(\cC_t)$ is local: the zero ideal is a prime thick tensor ideal.
\end{corollary}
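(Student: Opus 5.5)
We need to show that the zero ideal of $\K^b(\cC_t)$ is prime, i.e.\ $A\otimes B=0$ implies $A=0$ or $B=0$; the zero ideal is clearly thick and tensor-closed. The plan is to transport the question along the tensor-triangulated functor $\Phi$ from \eqref{eq:Phi}. Proposition~\ref{prop:00} says that $\Phi$ detects zero objects, so it suffices to know that the zero ideal of the target $\K^b((\Rep\GL)_{t-1})$ is prime.

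First I settle the target. Since $t\notin\mZ$, we also have $t-1\notin\mZ$, so $(\Rep\GL)_{t-1}$ is semisimple abelian with simple unit and $\End(\unit)=\bk$; this is recalled in \ref{indecomposables3}. Consequently every bounded complex over it is homotopy equivalent to its cohomology with zero differential. This gives an equivalence between $\K^b((\Rep\GL)_{t-1})$ and $\mZ$-graded objects of $(\Rep\GL)_{t-1}$ with finite support, and the tensor product corresponds to the graded tensor product. Because every simple object is rigid, the tensor product of two nonzero objects of $(\Rep\GL)_{t-1}$ is nonzero. Indeed, if $Y\otimes Y'=0$ with $Y\neq 0$, then $Y^\ast\otimes Y\otimes Y'=0$. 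But $\unit$ is a summand of $Y^\ast\otimes Y$: the evaluation map is split by the coevaluation map, up to the nonzero scalar $\dim Y$ (if $\dim Y\neq 0$), or more robustly because $\ev_Y$ is a nonzero map onto the simple object $\unit$ in a semisimple category. Hence $Y'=0$. For graded objects, taking the top nonzero degrees of $M$ and $N$ shows that $M\otimes N$ is nonzero in the sum of those degrees. So the zero ideal of the target is prime.

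Then I conclude. Let $A,B\in\K^b(\cC_t)$ with $A\otimes B=0$. Since $\Phi$ is a tensor functor, $\Phi(A)\otimes\Phi(B)\cong\Phi(A\otimes B)=0$. By the previous step, $\Phi(A)=0$ or $\Phi(B)=0$. By Proposition~\ref{prop:00}, this gives $A=0$ or $B=0$. Hence $0$ is a prime thick tensor ideal.

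The main obstacle is the primality of the zero ideal in the semisimple target. The one point needing care is the claim that tensor products of nonzero objects in $(\Rep\GL)_{t-1}$ are nonzero. I would argue this with the epimorphism $\ev_Y:Y^\ast\otimes Y\to\unit$: in a semisimple category, a nonzero map onto the simple object $\unit$ splits, so $\unit$ is a summand of $Y^\ast\otimes Y$.
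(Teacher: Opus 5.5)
Your proposal is correct and follows the paper's proof: you identify $\K^b((\Rep\GL)_{t-1})$ with graded objects of a semisimple category, use that $\unit$ is a summand of $Y^\ast\otimes Y$ to see that nonzero objects have nonzero tensor product, and pull primality of the zero ideal back along $\Phi$ via Proposition~\ref{prop:00}. The one point you leave unstated, that the zero ideal is proper, is immediate from $\Phi(\unit)=\unit\neq 0$, and the paper also leaves it implicit.
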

\begin{proof}
Since $(\Rep\GL)_{t-1}$ is semisimple, its bounded homotopy category is equivalent to the (semisimple abelian) tensor category of $\mZ$-graded objects in $(\Rep\GL)_{t-1}$. Since no two non-zero objects in $(\Rep\GL)_{t-1}$ have zero tensor product (for non-zero $X\in (\Rep\GL)_{t-1}$, the object $X^\ast\otimes X$ contains the simple object $\unit$ as a direct summand and hence $X^\ast\otimes (X\otimes Y)$ contains $Y$ as a direct summand), the conclusion follows from Proposition~\ref{prop:00}.
\end{proof}

\subsection{The counterexample}

\begin{theorem}
The surjection \eqref{eq:twospec}
\[\Spc^{\mathrm{h}}(\K^b(\cC_t))\to \Spc(\K^b(\cC_t))\]
from \cite[Corollary~3.9]{Balmer20} is not a bijection.
\end{theorem}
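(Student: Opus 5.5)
We reduce to the criterion of \cite[Theorem~A.1]{Balmer20b}. By Corollary~\ref{cor:local}, $\K^b(\cC_t)$ is local with unique closed point the zero ideal, and it is rigid. For such a category that criterion says the comparison map \eqref{eq:twospec} fails to be injective as soon as there are morphisms $f,g$ with $f\otimes g=0$ such that neither $f$ nor $g$ is $\otimes$-nilpotent, even after tensoring with an auxiliary non-zero object. Concretely, we must show that $X\otimes f^{\otimes n}\neq 0$ and $X\otimes g^{\otimes n}\ne0$ for every non-zero $X$ and every $n\ge1$. We use the morphisms $f,g:\unit\to V_t$ of $\cC_t$, viewed as complexes concentrated in degree $0$. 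The relation $f\otimes g=0$ holds by construction.

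\emph{Non-nilpotence of $f$.} Apply the tensor-triangulated functor $\Phi$ of \eqref{eq:Phi}. It sends $f$ to $(0,\id_\unit):\unit\to X_{t-1}\oplus\unit$, which is a split monomorphism. Hence $\Phi(f)^{\otimes n}$ is a split monomorphism $\unit\to(X_{t-1}\oplus\unit)^{\otimes n}$, and so is $\Phi(X)\otimes\Phi(f)^{\otimes n}$. By Proposition~\ref{prop:00}, $X\ne0$ implies $\Phi(X)\neq0$. A split monomorphism out of a non-zero object is non-zero, so $\Phi(X\otimes f^{\otimes n})\ne 0$ and therefore $X\otimes f^{\otimes n}\neq 0$.

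\emph{Non-nilpotence of $g$.} Here we need a second functor in which $g$ becomes split and $f$ dies. The presentation of $\cC_t$ is symmetric in $f$ and $g$, so Corollary~\ref{cor:uni} gives a tensor functor $\Phi_0':\cC_t\to(\Rep\GL)_{t-1}$ with $V_t\mapsto X_{t-1}\oplus\unit$, $f\mapsto 0$, $g\mapsto(0,\id_\unit)$. Alternatively, $\Phi_0'$ is $\Phi_0$ precomposed with the autoequivalence of $\cC_t$ swapping $f$ and $g$. This functor agrees with $\Phi_0$ on the image of $\Theta$, because $\Phi_0\circ\Theta$ does not involve $f$ or $g$. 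Hence Lemma~\ref{lem:lower}, and then the proof of Proposition~\ref{prop:00} verbatim, apply to the induced functor $\Phi'$ on $\K^b(\cC_t)$: it is conservative on objects. The argument of the previous paragraph then shows $X\otimes g^{\otimes n}\ne0$ for all non-zero $X$ and all $n$.

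The main obstacle is matching the hypotheses of \cite[Theorem~A.1]{Balmer20b} exactly. We must check the precise notion of ``local'' used there (the zero ideal being prime, which is Corollary~\ref{cor:local}), and whether the morphisms are required to have source $\unit$, which holds here. We must also confirm that the nilpotence condition is indeed ``$X\otimes f^{\otimes n}\neq0$ for all $X\ne0$'', which the split-monomorphism argument yields. The theorem applies because $\K^b(\cC_t)$ is the compact part of a rigidly-compactly generated category, as shown in the Example. If instead one wanted to argue directly, one could exhibit two distinct homological primes, the kernels of the homological functors $\widehat{\Phi}$ and $\widehat{\Phi'}$ on Freyd envelopes, composed with suitable homological residue fields of $(\Rep\GL)_{t-1}$, which is semisimple. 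Both lie over the zero ideal. They are distinct because $\mathrm{coker}(\hat f)$ lies in one Serre ideal but not the other, with roles reversed for $g$. Still, invoking the criterion is the cleaner route.
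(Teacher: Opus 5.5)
Your proposal is correct and follows the paper's first proof: you reduce via \cite[Theorem~A.1]{Balmer20b} and combine conservativity of $\Phi$ (Proposition~\ref{prop:00}) with $\Phi_0(f)$ being a split monomorphism, and your functor $\Phi'$ simply spells out the paper's ``by symmetry'' step for $g$. One slip in your optional aside: $\operatorname{coker}\hat f$ is killed by neither $\Phi$ nor $\Phi'$; it is $\ker\hat f$ that lies in $K_\Phi$ but not in $K_{\Phi'}$, and showing the resulting maximal Serre ideals are distinct still needs the paper's argument that $f\otimes g=0$ cannot become a monomorphism in a proper quotient.
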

\begin{proof}
By Corollary~\ref{cor:local}, the zero ideal is a prime thick tensor ideal in $\K^b(\cC_t)$.
We can prove that its fibre is not a singleton in two ways. We start with the easier, but perhaps less revealing proof. 

By \cite[Theorem~A.1]{Balmer20b} it is sufficient to demonstrate that $Z\otimes f^{\otimes n}\not=0$ for all $0\not=Z\in \K^b(\cC_t)$ and $n\in\mN$, as well as the corresponding claim for $g$. By symmetry, we only need to deal with $f$. It is sufficient to show that for non-zero $Z\in \K^b(\cC_t)$ and $n\in\mN$, the morphism
\[\Phi(Z\otimes f^{\otimes n})\;\cong\; \Phi(Z)\otimes \Phi(f)^{\otimes n}\]
is not zero, with $\Phi$ from \eqref{eq:Phi}. However, by \eqref{eq:Phi0} we have that $\Phi_0(f)$ is a split monomorphism $\unit\to \unit\oplus X_{t-1}$ and moreover $\Phi(Z)\not=0$ by Proposition~\ref{prop:00}, which concludes the proof.

Alternatively, we can consider the homological functors
\[\Phi,\Psi: \K^b(\cC_t)\rightrightarrows \cT:=\K^b((\Rep \GL)_{t-1}),\]
to the (semisimple rigid) abelian tensor category $\cT$, where $\Phi$ is as in \eqref{eq:Phi} and $\Psi$ is defined identically but with roles of $f$ and $g$ reversed. We have the universal homological functor (Yoneda embedding) $\K^b(\cC_t)\to\cA^{\fp}$ into the abelian category $\cA^{\fp}$ of finitely presented right $\K^b(\cC_t)$-modules, see \cite[Chapter 5]{NeBook}, \cite[Remark~2.6]{Balmer20} or \cite[Theorem~2.2.3 and Corollary~2.3.6(2)]{HomKer}. In particular we have exact tensor functors $\cA^{\fp}\rightrightarrows \cT$ defining Serre tensor ideals $K_{\Phi}$ and $K_{\Psi}$ in $\cA^{\fp}$ that intersect $\K^b(\cC_t)$ trivially by Proposition~\ref{prop:00}. By \cite[Lemma~3.8]{Balmer20} there is a maximal Serre tensor ideal in $\cA^{\fp}$ which contains $K_{\Phi}$ and which intersects $\K^b(\cC_t)$ trivially. The same also holds for $K_{\Psi}$ and, by definition of the homological spectrum, to conclude the proof it suffices for us to show that the two resulting maximal Serre tensor ideals cannot coincide. Since $\Phi$ sends $f$ to a monomorphism, we know that $K_\Phi$ contains $\ker f$, where we denote $f$ by the same symbol when viewed in $\cA^{\fp}$. If a proper Serre tensor ideal contained both $K_\Phi$ and $K_{\Psi}$, it would thus send both $f$ and $g$ to monomorphisms in the corresponding Serre quotient of $\cA^{\fp}$. But then $f\otimes g=0$ also has to be a monomorphism in this quotient, since tensor products with rigid objects are exact.  This can only be the case if $\unit=0$ in the quotient, contradicting that the ideal was proper.
\end{proof}

\subsection*{Acknowledgements} 
The author was partly supported by grants FT220100125 and DP250100762 from the Australian Research Council. The author thanks Logan Hyslop for pointing out \cite{BHR, Hyslop2}.

\end{document}